\newtheorem{theorem}{Theorem}
\newtheorem{lemma}{Lemma}
\theoremstyle{definition}
\newtheorem{question}{Question}
\newcommand{\Hi}{{\mathscr{H}}^\infty}
\newcommand{\Ht}{{\mathscr{H}}^2}
\newcommand{\Hp}{{\mathscr{H}}^p}
\newcommand{\D}{\mathbb{D}}
\newcommand{\C}{\mathbb{C}}
\newcommand{\T}{\mathbb{T}}
\newcommand{\real}{{\mathbf R}}
\newcommand{\complex}{{\mathbf C}}
\newcommand{\nanu}{{\mathbf N}}
\DeclareMathOperator{\supp}{{\rm supp\,}}
\newcommand{\epsi}{\varepsilon}
\newcommand{\beqla}[1] {\begin {eqnarray}\label{#1}}
\def \eeq {\end {eqnarray}}
\newcommand{\refeq}[1]{(\ref{#1})}
\newcommand{\beqno}{\begin{eqnarray*}}
\newcommand{\eeqno}{\end{eqnarray*}}
\begin{document}

\title[Integral means and boundary limits of Dirichlet series]
{Integral means and boundary limits\\ of Dirichlet series}

\author{Eero Saksman}
\address{Department of Mathematics and Statistics, University of Helsinki,
P.O. Box 68 (Gustaf H\"allstr\"omin katu 2b), FI-00014 University of
Helsinki, Finland} \email{eero.saksman@helsinki.fi}

\author{Kristian Seip}
\address{Department of Mathematical Sciences, Norwegian University of Science and Technology,
NO-7491 Trondheim, Norway} \email{seip@math.ntnu.no}
\thanks{The first author is supported by the Academy of Finland, projects
no.\ 113826 and 118765. The second author is
supported by the Research Council of Norway grant 160192/V30.
 This research is part of the European Science
Foundation Networking Programme ``Harmonic and Complex Analysis and
Applications HCAA}

\subjclass[2000]{Primary 30B50, 42B30. Secondary 46E15, 46J15.}
\date{November 14, 2007.}


\begin{abstract}
We study the boundary behavior of functions in the Hardy spaces
$\Hp$ for ordinary Dirichlet series. Our main result, answering a
question of H. Hedenmalm, shows that the classical F. Carlson
theorem on integral means does not extend to the imaginary axis for
functions in $\Hi$, i.e., for ordinary Dirichlet series in
$H^\infty$ of the right half-plane. We discuss an important
embedding problem for $\Hp$, the solution of which is only known
when $p$ is an even integer. Viewing $\Hp$ as Hardy spaces of the
infinite-dimensional polydisc, we also present analogues of Fatou's
theorem.
\end{abstract}

\maketitle

\section{Introduction}

A classical theorem of F. Carlson \cite{C22} says that if an
ordinary Dirichlet series \begin{equation} \label{dirichlet}
f(s)=\sum_{n=1}^\infty a_n n^{-s} \end{equation} converges in the
right half-plane $\Re s>0$ and is bounded in every half-plane $\Re
s\ge \delta>0$, then for each $\sigma>0$,
\begin{equation} \label{means}  \lim_{T\to \infty}\frac{1}{T}\int_0^T |f(\sigma+i t)|^2
dt= \sum_{n=1}^\infty |a_n|^2 n^{-2\sigma}.
 \end{equation} From a
modern viewpoint, Carlson's theorem is a special case of the
general ergodic theorem, as will be explained below.

A natural question, first raised by H. Hedenmalm \cite{H04}, is
whether the identity \eqref{means} remains valid when $\sigma=0$,
provided $f(s)$ is a bounded function in $\Re s>0$. The problem
makes sense because we may replace $f(\sigma+it)$ by the
nontangential limit $f(it)$, which in this case exists for almost
every $t$. We note that the general ergodic theorem is of no help
for this problem.

We denote by $\Hi$ the class of functions $f(s)$ that are bounded
in $\Re s>0$ with $f$ represented by an ordinary Dirichlet series
\eqref{dirichlet} in some half-plane. We will use the notation
\[ \| f\|_\infty =\sup_{\sigma >0} |f(\sigma+it)| \ \ \ \text{and}
\ \ \ \|f\|_{2}^2= \sum_{n=1}^\infty |a_n|^2.\] Our main result is
that there is no ``boundary version'' of Carlson's theorem:

\begin{theorem}\label{th1} The following two statements hold:
\begin{itemize}
\item[(i)] There exists a function $f$ in $\Hi$ such that
\[ \lim_{T \to \infty} \frac{1}{T} \int_0^T |f(it)|^2 dt \]
does not exist. \item[(ii)] Given $\epsi >0$, there exists a
singular inner function $g$ in $\Hi$ such that $\| g\|_{2}\leq \epsi
.$
\end{itemize}
\end{theorem}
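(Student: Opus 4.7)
The plan is to establish (ii) first, and then deduce (i) from it.

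\textbf{Plan for (ii).}
The natural setting is the Bohr correspondence, which identifies $\Hi$ isometrically with $H^\infty(\D^\infty)$ and sends the imaginary axis $i\real$ to the Kronecker orbit $\gamma\colon t\mapsto(p_1^{-it},p_2^{-it},\ldots)$ in the infinite torus $\T^\infty$. Crucially, $\gamma$ is a Haar-null subset of $\T^\infty$, so there is no \emph{a priori} inequality linking the boundary modulus of a Dirichlet series along $\gamma$ (that is, on $i\real$) to the $L^2(\T^\infty)$-norm of its Bohr lift, which by Parseval equals $\|g\|_2$. This decoupling is what leaves room for a singular inner function in $\Hi$ with arbitrarily small Dirichlet $\ell^2$-mass. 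Concretely, I would try to exhibit $g$ as a weak-$\ast$ limit in $\Hi$ of a sequence of Dirichlet polynomials $f_N$ with $\|f_N\|_\infty\le 1$ and $\|f_N\|_2\le\epsi$, whose boundary moduli $|f_N(it)|$ approach $1$ in measure on $[-N,N]$. The polynomials $f_N$ should be supported on disjoint blocks of large primes, so that a tensor-product identity on the $\D^\infty$ side keeps the Dirichlet $\ell^2$-mass controlled throughout the construction.

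\textbf{Main obstacle.}
The central difficulty is the simultaneous requirement that the limit $g$ be inner ($|g(it)|=1$ a.e.), be non-degenerate (not identically zero), and have small Dirichlet $\ell^2$-mass. A naive product $\prod_k h_k$ of single-prime inner factors always satisfies $\|\prod_k h_k\|_2=1$, since on the $\D^\infty$ side each $h_k$ contributes a tensor factor of $L^2$-norm $1$; the small-$\ell^2$ property must therefore arise from a genuine infinite-dimensional cancellation that exploits the Haar-null nature of $\gamma$. Non-degeneracy of the limit will likely need a Blaschke-type summability condition on the parameters of the factors, and verifying that the boundary values $|g(it)|$ are really $1$ a.e.\ (not just $\le 1$) at the end of the limiting procedure is where the ``singular inner'' conclusion has to be genuinely established, as opposed to following formally.

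\textbf{Plan for (i).}
Once (ii) is available, Carlson's identity at $\sigma=0$ is already seen to fail for a singular inner $g$ with $\|g\|_2\le\epsi$, since $T^{-1}\int_0^T|g(it)|^2\,dt=1$ whereas $\|g\|_2^2\le\epsi^2$. To upgrade this from ``wrong limit'' to ``no limit'', I would produce an $f$ by combining singular inner functions at widely separated imaginary shifts, say
$$ f(s) = \sum_{n=1}^\infty c_n\, g_n(s+i\tau_n) $$
with $\|g_n\|_2\le 4^{-n}$, and choosing the weights $c_n$, shifts $\tau_n$ inductively so that along two interleaved subsequences $T_j',T_j''\to\infty$, different subcollections of the $g_n$ dominate the averaged modulus, producing two distinct accumulation points for $T^{-1}\int_0^T|f(it)|^2\,dt$. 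Off-diagonal cross-correlations between $g_n(\cdot+i\tau_n)$ and $g_m(\cdot+i\tau_m)$ are controlled via the identity $|g_n(it)|=1$ a.e.\ together with standard estimates for Dirichlet series and Bohr-almost-periodic functions.
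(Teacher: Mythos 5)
Your plan identifies the right tension (a singular inner function in $\Hi$ cannot come from a tensor product of one-variable inner factors, since each such factor has $L^2$-norm $1$), and you correctly flag the central gap yourself: you never actually produce a mechanism that forces $|g(it)|=1$ a.e.\ on the orbit while keeping $\|g\|_2$ small. That is precisely the part of the argument that the paper supplies and that your proposal leaves unresolved. The paper does \emph{not} work in $\D^\infty$ with blocks of large primes; it works in $\D^2$ with only the primes $2$ and $3$. The engine is Rudin's construction of polyharmonic functions on $\D^2$ with prescribed boundary data a.e.\ on $\T^2$, in which one writes a lower semicontinuous boundary function as $\sum p_j$ (non-negative trigonometric polynomials) and subtracts singular measures $\mu_j=p_j\lambda_{k_j}$, where $\lambda_k$ is Rudin's singular measure supported on the antidiagonal rings $\{\theta_1+\theta_2\equiv \mathrm{const}\}$. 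The crucial and non-obvious step (Lemma~2 of the paper) is a purely geometric observation: the image $L$ of the imaginary axis under $\phi(it)=(2^{-it},3^{-it})$ consists of segments in the direction $(\log 2,\log 3)$, while each $\lambda_k$ is translation-invariant in the direction $(1,-1)$; combining Rudin's a.e.\ radial-limit theorem with Fubini and this translation invariance shows that $P\lambda_k$ has radial limit $0$ at a.e.\ point of $L$. This is what makes it possible to cover $L$ by a thin open strip $U$ with $m_2(U)<\epsi$, set $h=P(\chi_U+(\epsi/2)\chi_{U^c})-P\mu$, and get an analytic $H$ on $\D^2$ whose real part $h$ has boundary limit $1$ a.e.\ on $L$ even though $m_2(U)$ is tiny. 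Exponentiating, $g(s)=\exp\big(k(H(2^{-s},3^{-s})-1)\big)$ is then the singular inner function with $\|g\|_2\le\epsi$, after also checking that the needed radial-limit facts survive passage from radial to quasi-radial approach (since horizontal approach in $\C^+$ maps to approach with $(1-r_1)/(1-r_2)$ bounded).

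Your suggested reduction of (i) to (ii) by summing shifted singular inner functions is a plausible alternative in principle, but the cross-correlation control you wave at is not trivial, and you would still owe a verification that the shifted sum stays in $\Hi$ and that the two interleaved averages separate. The paper sidesteps this entirely: it reuses the same Rudin machinery, but instead of covering all of $L$ by $U$ it alternately covers and avoids long stretches of the imaginary axis (choosing sets $U_n$ inductively via Weyl's equidistribution theorem for the Kronecker flow), so that $|f(it)|$ is near $1$ on some long intervals and near $\epsi/2$ on others, directly forcing $\liminf\le\epsi$ and $\limsup=1$ for the Ces\`aro averages. In short: the core missing ingredient in your proposal is Rudin's bidisc construction together with the ``transversality'' observation that the singular measures $\lambda_k$ are invariant along $(1,-1)$ while the orbit $L$ runs along $(\log 2,\log 3)$; without some such mechanism your plan does not close.
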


In other words, the limit on the left-hand side of \eqref{means} may
fail to exist, and even it does exist, the identity \eqref{means}
need not hold. In the next section, we will see that both parts of
the theorem rely on a basic construction of W. Rudin \cite{R69}
concerning radial limits of analytic functions in polydiscs.


To see how to obtain Carlson's theorem as a special case of the
general ergodic theorem, we resort to a fundamental observation of
Bohr \cite{B13b}. We put
\[ z_1=2^{-s},\ z_2=3^{-s}, \, ...,\, z_j=p_j^{-s}, \, ..., \]
where $p_j$ denotes the $j$-th prime; then, in view of the
fundamental theorem of arithmetic, the Dirichlet series
\eqref{dirichlet} can be considered as a power series in infinitely
many variables. For a given Dirichlet series $f$ we denote by $F$
the corresponding extension to the infinite polydisc $\D^\infty$;
then if $F$ happens to be a function of only $n$ variables, it is
immediate from Kronecker's theorem and the maximum principle that
\begin{equation} \|f\|_\infty = \|F\|_\infty , \label{assoc}
\end{equation} where the norm on the right-hand side is the
$H^\infty(\D^n)$ norm. The result is the same in the
infinite-dimensional case, but some care has to be taken when
defining the norm in the polydisc. (See \cite{HLS97} for details.)
We can now think of any vertical line $t\mapsto \sigma+it$ as an
ergodic flow on the infinite-dimensional torus $\T^\infty$:
\[
(\tau_1,\tau_2,\ldots ) \mapsto (p_1^{-i t}\tau_1, p_2^{-i t}\tau_2,
...) \quad \mbox{for}\;\; (\tau_1,\tau_2,\ldots )\in \T^\infty .
\] If $F(p_1^{-\sigma}z_1, p_2^{-\sigma} z_2, ...)$ is continuous on
$\T^\infty$, then the general ergodic theorem yields \eqref{means}.

A similar problem concerning integral means of nontangential limits
can be stated for the closely related space $\Ht$, which consists of
those Dirichlet series of the form \eqref{dirichlet} for which
$\|f\|_2<\infty$. In this case, $f(s)/s$ belongs to the Hardy space
$H^2$ of the half-plane $\sigma>1/2$, thanks to the following
embedding (see \cite[p. 140]{M94}, \cite[Theorem 4.1]{HLS97}):
\begin{equation} \label{embedding} \int_{\theta}^{\theta+1}\left|f\left(\frac{1}{2}+i
t\right)\right|^2 dt \le C \|f\|_2^2,
\end{equation}
with $C$ an absolute constant independent of $\theta$. It follows
immediately that we have \begin{equation}\label{pmeans} \lim_{T\to
\infty}\frac{1}{T}\int_0^T \left|f\left(\frac{1}{2}+i
t\right)\right|^2 dt= \sum_{n=1}^\infty |a_n|^2 n^{-1}\end{equation}
for every function $f$ in $\Ht$, since the space of Dirichlet
polynomials is dense in $\Ht$ and the identity holds trivially when
$f$ is a Dirichlet polynomial.

It is interesting to compare our Theorem~1 with what has been proved
about pointwise convergence of Dirichlet series in $\Ht$ and in
$\Hi$. Hedenmalm and Saksman \cite{HS03} showed that the Dirichlet
series of a function in $\Ht$ converges almost everywhere on the
vertical line $\sigma=1/2$. (See \cite{KQ01} for a short proof that
gives the result as a corollary of L. Carleson's theorem on almost
everywhere convergence of Fourier integrals.) On the other hand, F.
Bayart, S. V. Konyagin, and H. Queff\'{e}lec \cite{BKQ} exhibited an
example of a function $f$ in $\Hi$, continuous in the closed
half-plane $\sigma\ge 0$, whose Dirichlet series diverges everywhere
on the imaginary axis $\sigma=0$. Our result is consistent with
these findings: ``less'' remains of the Dirichlet series on the
boundary in the $\Hi$ setting than in the $\Ht$ setting.

In Section~3 of this paper, after the proof of Theorem~1, we will
discuss the curious situation that occurs when we replace $\Ht$ by
the spaces $\Hp$ ($1\le p < \infty$), which were introduced and
studied by Bayart \cite{B00}. The question of whether there is a
$p$-analogue of \eqref{embedding} for every $p\ge 1$ appears as the
most important problem regarding $\Hp$. This problem seems to
require quite nontrivial analytic number theory. At present, beyond
$p=2$, the result is known only when $p$ is an even integer, which
is just a trivial extension of \eqref{embedding}. Our discussion of
this problem will draw attention to those properties of $\Hp$ that
force us to abandon the standard analytical approach involving
interpolation techniques.

Finally, in Section~4, we will present certain analogues of the
Fatou theorem for Hardy spaces of the infinite-dimensional polydisc.
Our version of the Fatou theorem for $\Hp$ gives sense to the
statement that the $p$-analogue of \eqref{embedding} holds if and
only if the $p$-analogue of \eqref{pmeans} holds.

\section{Proof of Theorem \ref{th1}}

Before embarking on the proof of Theorem~1, we make some simple
observations in order to clarify what our problem is really about.
We note that another way of phrasing Hedenmalm's question is to ask
whether we have
\[   \lim_{T\to \infty}\frac{1}{T}\int_0^T |f(i t)|^2
dt=\lim_{\sigma\to 0^+} \lim_{T\to \infty}\frac{1}{T}\int_0^T
|f(\sigma+i t)|^2 dt \] for every $f$ in $\Hi$. We observe that
for a finite interval, say for $t_1<t_2$, we have indeed
\[
\int_{t_1}^{t_2} |f(i t)|^2 dt = \lim_{\sigma\to 0^+}
\int_{t_1}^{t_2} |f(\sigma+i t)|^2 dt, \] as follows by Lebesgue's
dominated convergence theorem. Similarly, by applying Cauchy's
integral theorem and again Lebesgue's dominated convergence
theorem, we get
\[ a_n=\lim_{T\to \infty} \frac{1}{T}\int_0^T f(i t)n^{it}
dt, \] for every positive integer $n$. Let us also note that the
upper estimate
\[ \liminf_{T\to \infty}\frac{1}{T}\int_0^T |f(i t)|^2
dt\ge \lim_{\sigma\to 0^+} \lim_{T\to \infty}\frac{1}{T}\int_0^T
|f(\sigma+i t)|^2 dt =\|f\|_2^2
 \] may be obtained from the Poisson integral
representation of $[f(\sigma+i t)]^2$, i.e.,
\[ [f(\sigma+i t)]^2=\frac{1}{\pi} \int_{-\infty}^\infty
[f(i\tau)]^2 \frac{\sigma}{(t-\tau)^2+\sigma^2} dt. \] We conclude
from these observations that the counterexamples of Theorem~1 should
be functions whose nontangential limits have increasing oscillations
when the argument $t$ tends to $\infty$.

We begin by recalling some terminology and briefly reviewing Rudin's
method for constructing real parts of analytic functions in the
polydisc $\D^n$ with given boundary values almost everywhere on the
distinguished boundary $\T^n$. Rudin treats $\D^n$ with arbirary
$n\geq 1$, but we shall need only the case $n=2$. We refer to
\cite[pp. 34--36]{R69} for full details of the construction.

We employ the complex notation for points on the distinguished
boundary $\T^2$ of the bidisc $\D^2.$  The normalized Lebesgue
measure on $\T^2$ is denoted by $m_2$. The distance between
$\tau=(\tau_1 ,\tau_2)$ and $\tau'=(\tau'_1 ,\tau'_2)$ is
$$d
(\tau,\tau'):= \max (|\tau_1-\tau'_1|,|\tau_2-\tau'_2|),
$$
 and $B(\tau ,r)$ stands for the  ball with center $\tau$ and radius $r$.
We set
$$
P_r(\tau):=\frac{(1-r^2)^2}{|1-r\tau_1|^2|1-r\tau_2|^2},\quad
0<r<1,
$$
where $\tau=(\tau_1 ,\tau_2)$ is a point in $\T^2$. In particular,
the Poisson integral of a measure $\mu$ on $\T^2$ can then be
expressed in the form
$$
P\mu (r\tau)=\int_{\T^2}P_r(\tau\overline{w})\mu (dw),
$$
where
$\tau\overline{w}:=(\tau_1\overline{w_1},\tau_2\overline{w_2}).$
For every finite Borel measure $\mu$ and every $\tau\in \T^2$,
the Poisson maximal operator is defined by setting  $P_*|\mu
|(\tau ):=\sup_{r\in (0,1)} P_r|\mu |(\tau )$. The following
estimate is immediate.
\begin{lemma}\label{le1}
We have $P_r(\tau)\leq 16 (d(\tau,(1,1)))^{-2}$ for $r\in (0,1).$
In particular, if $s=d(\tau,\supp(\mu))>0,$ then $P_*\mu (\tau
)\leq 16s^{-2}\|\mu \| .$
\end{lemma}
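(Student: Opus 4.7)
The plan is to establish the pointwise Poisson bound first, and then derive the maximal function estimate from it almost immediately, exploiting the identity $|1-\tau_j\overline{w_j}|=|\tau_j-w_j|$ valid when $|w_j|=1$.

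For the pointwise bound the main ingredient is the elementary one-variable inequality
$$ |1-r\zeta|\ \geq\ \tfrac12|1-\zeta|, \qquad \zeta\in\T,\ r\in[0,1). $$
Writing $\zeta=e^{i\theta}$ one computes $|1-r\zeta|^2=(1-r)^2+r|1-\zeta|^2$, and the claim $4|1-r\zeta|^2-|1-\zeta|^2\geq 0$ reduces to $2+4r^2+2(1-4r)\cos\theta\geq 0$. For $r\leq 1/4$ the factor $1-4r$ is nonnegative, so bounding $\cos\theta\geq -1$ gives the nonnegative quantity $4r^2+8r$; for $r>1/4$ the factor $1-4r$ is negative, so bounding $\cos\theta\leq 1$ gives the perfect square $4(r-1)^2$. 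This finishes the auxiliary inequality, and it is the one spot in the argument where a small case split is unavoidable.

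Granted this, assume without loss of generality that $d(\tau,(1,1))=|1-\tau_1|$. Applying the one-variable bound to $\tau_1$ and the trivial estimate $|1-r\tau_2|\geq 1-r$ to $\tau_2$ gives
$$ P_r(\tau)\ =\ \frac{(1-r)^2(1+r)^2}{|1-r\tau_1|^2\,|1-r\tau_2|^2}\ \leq\ \frac{4(1-r)^2(1+r)^2}{|1-\tau_1|^2(1-r)^2}\ =\ \frac{4(1+r)^2}{d(\tau,(1,1))^2}\ \leq\ \frac{16}{d(\tau,(1,1))^2}, $$
which is exactly the first claim of the lemma.

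For the second claim I would use that $|w_j|=1$ implies $|1-\tau_j\overline{w_j}|=|\overline{w_j}(w_j-\tau_j)|=|\tau_j-w_j|$, and hence $d(\tau\overline{w},(1,1))=d(\tau,w)$. Therefore, for every $w$ in the support of $\mu$ the hypothesis $s=d(\tau,\supp(\mu))>0$ yields $d(\tau\overline{w},(1,1))\geq s$, and the first part applied pointwise gives $P_r(\tau\overline{w})\leq 16 s^{-2}$ uniformly in $r\in(0,1)$. Integrating this bound against $|\mu|$ and taking the supremum in $r$ produces $P_*|\mu|(\tau)\leq 16 s^{-2}\|\mu\|$, as desired. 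The only nontrivial step in the whole argument is the elementary half-plane-style inequality for $|1-r\zeta|$; once that is in hand, everything else is bookkeeping.
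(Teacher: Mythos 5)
Your proof is correct and complete. The paper states that the estimate is "immediate" and provides no argument, so there is no paper proof to compare against; your proposal supplies exactly the elementary details the authors omit. The two ingredients you use are the right ones: the one-variable inequality $|1-r\zeta|\ge\frac12|1-\zeta|$ for $\zeta\in\T$, $r\in[0,1)$ (your case split on $r\le 1/4$ versus $r>1/4$ checks out: the minimum of $2+4r^2+2(1-4r)\cos\theta$ over $\theta$ is $4r^2+8r$ in the first case and $4(1-r)^2$ in the second, both nonnegative), combined with the crude bound $|1-r\tau_2|\ge 1-r$ in the variable that does not realize $d(\tau,(1,1))$, and then the identity $|1-\tau_j\overline{w_j}|=|\tau_j-w_j|$ for $|w_j|=1$ to translate the pointwise bound at $(1,1)$ into the maximal estimate via $d(\tau\overline{w},(1,1))=d(\tau,w)\ge s$ for $w\in\supp\mu$. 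The constant tracking ($4\cdot(1+r)^2\le 16$) is also correct. Nothing to fix.
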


Let $g:\T^2\to\real$ be a strictly positive, integrable, and lower
semicontinuous function. Following Rudin, we may express it as
\[g=\sum_{j=1}^\infty p_j,\] where the $p_j$ are non-negative
trigonometric polynomials on $\T^2.$ For each $j\geq 1$, Rudin
shows that one may  choose a positive singular measure $\mu_j$
with $\mu_j (\T^2)=\int_{\T^2}p_j\, dm_2$ and so that
$P(p_j-\mu_j)$ is the real part of an analytic function on $\D^2.$
More specifically,  $\mu_j$ is chosen to be of the form
$p_j\lambda_{k_j}$, where $k_j\geq {\rm deg}(p_j)$ and for any
positive integer $k$ the measure $\lambda_k$ has the Fourier
series expansion \beqla{eq1}
\lambda_k=\sum_{j=-\infty}^{\infty}\exp (ikj(\theta_1+\theta_2))
\eeq on $\T^2$, where  $(\theta_1,\theta_2)$ corresponds to the
point $(e^{i\theta_1},e^{i\theta_2})$ on $\T^2.$ This measure is
positive, has mass one, and with respect to the standard Euclidean
identification $\T^2=[0,2\pi)^2$ of the 2-torus, it is just the
normalized 1-measure supported on $2k-1$ line segments of
$\T^2=[0,2\pi)^2$ parallel to the direction $(1,-1)$. On the
torus, its support consists of $k$ equally spaced closed
``rings''.

For  $s\in \complex^+:=\{ z\in\complex :\Re z\geq 0\}$, we set
 $\phi (s):=(2^{-s}, 3^{-s})$. The induced boundary map
takes the form $\phi(it)=\exp (-i\log (2)t, -i\log (3)t).$ We
denote the image of the  boundary by $L$. Thought of as a subset
of $[0,2\pi)^2$, $L$ consists of a dense set of segments that have
common direction vector $v_0:=(\log (2), \log (3)).$

\begin{lemma}\label{le2}
Let a summable sequence of nonnegative numbers $a_k$
{\rm (}$k=1,2,\ldots ${\rm )} be given. If the measure $\mu$ satisfies
$$
0\leq\mu\leq \sum_{k=1}^\infty a_k\lambda_k,
$$
then $\lim_{r\to 1^-}P\mu (\tau )=0$ for almost every $\tau\in L.$
\end{lemma}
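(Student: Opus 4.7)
The plan is to reduce the two-dimensional statement on $\T^2$ to the classical one-dimensional Fatou theorem for singular measures on $\T$, by pushing the $\lambda_k$ forward under the map $(\theta_1,\theta_2)\mapsto \theta_1+\theta_2$. Since the Poisson kernel is nonnegative, the hypothesis $0\leq \mu \leq \nu:=\sum_{k\geq 1}a_k\lambda_k$ gives $P\mu(r\tau)\leq P\nu(r\tau)$, so I may assume $\mu=\nu$ from the start.

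Combining the Fourier expansion \refeq{eq1} of $\lambda_k$ with the Fourier expansion of the bidisc Poisson kernel and interchanging summations, a short computation yields
\[
P\lambda_k(r\tau)=\frac{1-r^{4k}}{|1-r^{2k}e^{ik(\alpha_1+\alpha_2)}|^2}
\]
for $\tau=(e^{i\alpha_1},e^{i\alpha_2})\in\T^2$. The crucial point is that the right-hand side depends on $\tau$ only through $\alpha_1+\alpha_2$, and in fact coincides with the classical one-dimensional Poisson integral on $\D$, at radius $r^2$, of the atomic probability measure $\sigma_k:=\frac{1}{k}\sum_{\ell=0}^{k-1}\delta_{2\pi\ell/k}$ on $\T$, evaluated at $\alpha_1+\alpha_2$. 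Summing,
\[
P\nu(r\tau)=U(r^2,\,\alpha_1+\alpha_2),
\]
where $U(\rho,s)$ denotes the one-dimensional Poisson integral of the finite positive measure $\sigma:=\sum_{k\geq 1} a_k\sigma_k$ on $\T$. Since $\sigma$ is supported on the countable set $2\pi\rational\cap\T$, it is purely atomic and thus singular with respect to Lebesgue measure.

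The classical Fatou theorem for singular measures on the circle now gives $\lim_{\rho\to 1^-}U(\rho,s)=0$ for Lebesgue-a.e.\ $s\in\T$. On $L$ we have $\alpha_1+\alpha_2\equiv -t\log 6\pmod{2\pi}$, and since $t\mapsto -t\log 6$ is locally a translation it pulls Lebesgue measure on $\T$ back to a constant multiple of Lebesgue measure on $\real$; hence Lebesgue-a.e.\ $s\in\T$ corresponds to a.e.\ $\tau\in L$, and the lemma follows. I expect the step requiring the most care to be the Fourier calculation that collapses $P\lambda_k(r\tau)$ to a one-dimensional Poisson integral at the dilated radius $r^2$; once this identification is in place the remaining ingredients are entirely classical.
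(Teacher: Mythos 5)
Your proof is correct, and it takes a genuinely different route from the paper's. The paper invokes Rudin's theorem \cite[Theorem 2.3.1]{R69} to get $\lim_{r\to 1^-}P\mu(\tau)=0$ for $m_2$-a.e.\ $\tau\in\T^2$, and then transfers this to $L$ by a Fubini argument combined with the observation that $\mu$ is invariant under translations in the direction $(1,-1)$ (which is perpendicular to $L$'s direction vector $(\log 2,\log 3)$ only in an informal sense, but transverse to it). Your approach avoids Rudin's theorem entirely: you exploit the fact that $\widehat{\lambda_k}$ is supported on the diagonal $\{(kj,kj):j\in\integer\}$ to collapse the bidisc Poisson integral to a one-dimensional Poisson integral at the dilated radius $r^2$, evaluated at $\alpha_1+\alpha_2$; the computation $P\lambda_k(r\tau)=\sum_j r^{2k|j|}e^{ikj(\alpha_1+\alpha_2)}$ is easily verified to equal $P\sigma_k(r^2e^{i(\alpha_1+\alpha_2)})$ with $\sigma_k=\frac1k\sum_{\ell=0}^{k-1}\delta_{2\pi\ell/k}$, since $\widehat{\sigma_k}(m)=\mathbf{1}_{k\mid m}$. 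The resulting measure $\sigma=\sum_k a_k\sigma_k$ is a finite atomic (hence singular) measure, and the classical one-dimensional Fatou theorem finishes the job, with the final transfer to $L$ via the linear parametrization $t\mapsto -t\log 6\bmod 2\pi$, which preserves null sets. Both proofs ultimately use the same structural fact — $\lambda_k$'s spectrum lies on the diagonal — but the paper uses it to get translation invariance transverse to $L$, while you use it to reduce dimension. Your argument is more self-contained and gives a cleaner structural picture (the two-dimensional Poisson integral of $\sum a_k\lambda_k$ is literally a one-dimensional Poisson integral in disguise); the paper's argument is shorter given Rudin's theorem as a black box and generalizes more readily to measures without this special diagonal structure.
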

\begin{proof}
It is enough to prove the claim for $\mu = \sum_{k=1}^\infty
a_k\lambda_k$. By \cite[Theorem 2.3.1]{R69}, we know that
$\lim_{r\to 1^-}P\mu (\tau )=0$ for $m_2$-a.e.  $\tau\in \T^2.$ Pick
any segment $J\subset L$ of length 1/2, say. By Fubini's theorem we
see that for almost every $s\in [0,1/2]$ the claim holds for almost
every $\tau \in J+s(1,-1).$ However, since the measure $\mu$ is
invariant with respect to the translation $\tau\to \tau+s(1,-1)$, we
see that the statement is true for every $s\in [0,1/2]$. In
particular, we have $\lim_{r\to 1^-}P\mu (\tau )=0$ for almost every
$\tau\in J$. By expressing $L$ as a countable union of such
segments, we obtain the conclusion of the lemma.
\end{proof}

Part (i) of Theorem  \ref{th1}  will be deduced from the following
lemma. \pagebreak

\begin{lemma}\label{pr1} Given $\epsi >0$, there
is an open set $U\subset\T^2$ with $m_2(U) <\epsi /2$ and a
probability measure $\mu$ on $\T^2$ such that the function
$$
h=P(\chi_U+(1/2)\chi_{U^c}) -P\mu ,
$$
is the real part of a function in the unit ball of $H^\infty(\D^2).$
Moreover, $\lim_{r\to 1^-} h(r\tau )=1$ for almost every $\tau\in L$
with respect to the Hausdorff 1-measure on $L.$
\end{lemma}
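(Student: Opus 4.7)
The plan is to (i) choose $U$ as a thin tubular neighborhood of $L$, (ii) apply Rudin's construction to $g=\chi_U+(1/2)\chi_{U^c}$ to obtain the singular measure $\mu$ and the analytic function $F$ with $\Re F = h$, and (iii) deduce the boundary limit on $L$ via Lemma~\ref{le2}. For step (i), since $L=\bigcup_{n\in\integer}\phi([n,n+1])$ is a countable union of line segments of bounded arc length in $\T^2$, I take $U=\bigcup_n V_n$ where each $V_n$ is an open tube of width $w_n$ around $\phi([n,n+1])$ with widths decaying geometrically (for instance $w_n\sim c\epsi 2^{-|n|}$); this ensures $L\subseteq U$ and $m_2(U)<\epsi/2$.

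For step (ii), the function $g$ is lower semicontinuous with $1/2\le g\le 1$. Decompose $g=\sum_{j\ge 1}p_j$ into nonnegative trigonometric polynomials and pick distinct integers $k_j\ge\deg(p_j)$. Rudin's construction yields measures $\mu_j=p_j\lambda_{k_j}$ with $\mu:=\sum_j\mu_j$ and $h=P(g-\mu)=\Re F$ for some analytic $F$ on $\D^2$. The membership $F\in H^\infty(\D^2)$ with $\|F\|_\infty\le 1$ is arranged by factoring $F$ as a product of the outer function of modulus $\sqrt g$ with singular inner factors built from the $\mu_j$, exploiting the bound $g\le 1$. The crucial additional requirement is that the decomposition satisfy $\mu\le\sum_k a_k\lambda_k$ with $\{a_k\}$ summable --- equivalently, the quantities $\sup_{\supp(\lambda_{k_j})}p_j$ must be summable over $j$.

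For step (iii), Lemma~\ref{le2} gives $\lim_{r\to 1^-}P\mu(r\tau)=0$ for a.e.\ $\tau\in L$; since $L\subseteq U$ and $U$ is open, $g\equiv 1$ on a neighborhood of any $\tau\in L$, whence $Pg(r\tau)\to 1$; combining yields $h(r\tau)\to 1$ on $L$ a.e. A final rescaling absorbs any mass discrepancy to make $\mu$ a probability measure. The two main obstacles are the unit-ball condition on $F$ (which demands more than just boundedness of $\Re F$) and the summability of the dominating coefficients $\{a_k\}$: a naive per-tube decomposition $\chi_U=\sum_n\chi_{V_n}$ fails the latter since each tube forces sup norm $1$ in the sum, so one must use a more global approximation scheme that places the peaks of the polynomials $p_j$ away from the supports of the corresponding rings $\lambda_{k_j}$.
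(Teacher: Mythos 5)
Your steps (i) and (ii) --- a thin tubular neighborhood $U$ of $L$ with small measure, followed by Rudin's decomposition $g=\sum_j p_j$, $\mu_j=p_j\lambda_{k_j}$, $\mu=\sum_j\mu_j$ --- do follow the paper's outline. The real gap is in step (iii). You propose to obtain $\lim_{r\to 1^-}P\mu(r\tau)=0$ for a.e.\ $\tau\in L$ by feeding the whole of $\mu$ into Lemma~\ref{le2}, which forces you to demand the global domination $\mu\leq\sum_k a_k\lambda_k$ with $\sum_k a_k<\infty$, i.e.\ $\sum_j\sup_{\supp\lambda_{k_j}}p_j<\infty$. You notice that a per-tube decomposition fails this, but the remedy you sketch --- ``a more global approximation scheme that places the peaks of the $p_j$ away from the supports of the corresponding rings'' --- is not substantiated and, more to the point, appears not to be achievable: the rings in $\supp\lambda_{k_j}$ are spaced on the scale $1/k_j\leq 1/\deg p_j$, which is comparable to (or finer than) the oscillation scale of $p_j$, so they cannot systematically avoid the peaks. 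And if one retreats to the cruder bound, $\sum_j\|p_j\|_\infty<\infty$ is impossible outright: it would make $g=\sum_j p_j$ a uniform limit of continuous functions and hence continuous, contradicting the jump of $g$ across $\partial U$.

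The paper avoids any global domination. It arranges the decomposition so that $p_j(\tau')\leq j^{-2}$ whenever $d(\tau',\partial U)\geq j^{-1}$, and also $\int_{\T^2} p_j\,dm_2\leq j^{-2}$. Then, for a fixed $\tau\in L$ (so $s:=d(\tau,\partial U)>0$), it splits $\mu$ relative to the ball $B=B(\tau,s/2)$: the local tails $\sum_{k\geq k_0}\chi_B\mu_k$ (with $k_0\geq 2/s$) are dominated on $B$ by $\lambda_0:=\sum_j j^{-2}\lambda_{k(j)}$ --- this is the only place Lemma~\ref{le2} is invoked; the remote tails $\sum_{k\geq k_0}(\mu_k-\chi_B\mu_k)$ sit at distance $\geq s/2$ from $\tau$ and are controlled by the elementary Poisson-maximal bound of Lemma~\ref{le1} together with $\|\mu_k\|\leq k^{-2}$; and the finitely many $\mu_k$ with $k<k_0$ are disposed of one at a time. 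The idea you are missing is that the domination by $\sum a_k\lambda_k$ is only needed \emph{locally near each $\tau\in L\subset U$}, where $g$ has no jump and the $p_j$ converge uniformly, not across $\partial U$ where the jump makes global summability impossible. (A minor further point: the unit-ball claim for the analytic completion comes directly from Rudin's Theorem~3.5.2, not from the outer--singular factorization you sketch, though that is not the essential issue.)
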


\begin{proof}
We begin by covering $L$ with a thin open strip $U$ that becomes
thinner and thinner so that $m_2 (U)<\epsi /2 .$ For example, we
may take
$$
U :=\bigcup_{t\in\real} B\big(\phi (t),\frac{\epsi}{100(1+|t|)^{2}}\big).
$$
The next step is to run Rudin's construction with respect to the
positive and lower semicontinuous function
$\chi_U+(1/2)\chi_{U^c}$. Thus we choose strictly positive
trigonometric polynomials $p_1,p_2,\ldots$ on $T_2$ in such a way
that $\sum_{j=1}^\infty p_j=\chi_U+(\epsi/2)\chi_{U^c}$ at every
point of $\T^2.$ Moreover, by a compactness argument, we observe
that we may perform the selection in such a way that
 \beqla{eq5} 0\ < p_j(\tau )\leq j^{-2} \quad \mbox{if}
\quad d(\tau,\partial U)\geq j^{-1}. \eeq We may also require that $
\int_{\T^2} p_j\, dm_2\leq j^{-2}. $ We set
$\mu_j=p_j\lambda_{k(j)}$ and observe that \beqla{eq48} \|
\mu_j\|=\int_{\T^2} p_j\, dm_2\leq j^{-2}. \eeq Write \beqla{eq45}
\lambda_0:=\sum_{j=1}^\infty j^{-2}\lambda_{k(j)}.\nonumber \eeq
Then, according to Lemma \ref{le2}, we have
 \beqla{eq46} \lim_{r\to
1^-}P\lambda_0(r\tau )= 0\quad \mbox{for}\;\tau\in L\setminus E,
\eeq
 where $E$ has linear measure zero. A fortiori, we have in
particular that \beqla{eq4} \lim_{r\to 1^-}P\mu_j(r\tau )=0\quad
\mbox{for}\;\tau\in L\setminus E. \eeq

We now set $\mu=\sum_{j=1}^\infty \mu_j.$ The fact that $h:=
P(\chi_U+(\epsi/2)\chi_{U^c}) -P\mu $ is the real part of an
analytic function in the unit ball of $H^\infty(\D^2)$ is immediate
from Rudin's theorem \cite[Theorem 3.5.2]{R69}. Since $U$ is open
and the mass of the two-dimensional Poisson kernel concentrates on
any neighborhood of the origin as $r\to 1^-$, we see that
$\lim_{r\to 1^-}P(\chi_U+(\epsi/2)\chi_{U^c})(rw)=1$ for every $w\in
U.$ Hence it remains to verify that $ \lim_{r\to 1^-}P\mu(r\tau )\to
0$ for almost every $\tau\in L$ with respect to Hausdorff 1--measure
on $L$. In fact, we will show that \beqla{eq6} \lim_{r\to
1^-}P\mu(r\tau )= 0\quad \mbox{if }\;\tau\in L\setminus E, \eeq
which clearly suffices.

Fix an arbitrary $\tau\in L\setminus  E. $ Write
 $s=d(\tau,\partial U)>0$, $B= B(\tau ,s/2)$, and set
\beqla{eq7} \mu^a_k:= \chi_{B}\mu_k \quad\mbox{and}\quad \mu^b_k:=
\mu_k-\mu^a_k.\nonumber \eeq Pick $k_0\geq (s/2)^{-1} $. We clearly
have \beqla{eq8} \sum_{k=k_0}^\infty \mu^a_k\leq \lambda_0 \eeq so
that (\ref{eq46}) implies that \beqla{eq9} \lim_{r\to
1^-}P(\sum_{k=k_0}^\infty  \mu^a_k)(r\tau )=0. \eeq On the other
hand, we have  $d(\tau,\supp (\mu^b_k))\geq s/2$ and $\|
\mu^b_k\|\leq \| \mu_k\|\leq k^{-2}.$  Hence Lemma~\ref{le2} yields
\beqla{eq10} P_*(\sum_{k=k_0}^\infty  \mu^b_k)(r\tau )\leq
64s^{-2}\sum_{k=k_0}^\infty k^{-2} \leq C(\tau)k_0^{-1}. \eeq By
\eqref{eq4}, we have \beqla{eq11} \lim_{r\to
1^-}P(\sum^{k_0-1}_{k=1} \mu_k)(r\tau )= 0. \eeq As $k_0$ can be
chosen arbitrarily large, the desired conclusion follows by
combining  this fact with (\ref{eq9}) and (\ref{eq10}).
\end{proof}

\begin{proof}[Proof of Theorem \ref{th1}]

We begin by proving part (ii) of the theorem. Let $h$ be the
function given in Lemma~\ref{pr1}, and assume that it is the real
part of the analytic function $H$ on $\D^2$. When $k$ is large
enough, the function $ R:=\exp (k(H-1)) $ satisfies $\|
R\|_{H^\infty (\D^2)} =1$ and  $\| R\|_{H^2(\D^2)}\leq \epsi$.
Moreover, its modulus has radial boundary values 1 at almost every
point of the set $L$ with respect to linear measure. It is almost
immediate from this that the function
$$
g(s):=R(\phi (s))=R(2^{-s},3^{-s})
$$
is, by construction, a singular inner function in $\C^+$ with $\|
g\|_{\Ht} <\epsi .$ The only matter that requires a little
attention, is how we conclude that $|g|$ has unimodular boundary
values almost everywhere. The point is that horizontal boundary
approach in $\C^+$ does not transfer exactly via $\phi$ to radial
approach, but instead to what we will call quasi-radial approach.
This means that $(r_1w_1,r_2w_2)\to (w_1,w_2)$, where $r_1\to 1^-$
and $r_2\to 1^-$ in such a way that the ratio $(1-r_1)/(1-r_2)$
stays uniformly bounded from above and below. However, apart from a
change of non-essential constants, our proof of Lemma~\ref{pr1}
remains valid for quasi-radial approach. This is easily verified for
Lemma \ref{le1}, and it remains true for the basic theorem
\cite[Theorem 2.3.1]{R69} on radial limits of singular measures (see
\cite[Exercise 2.3.2.(d)]{R69}). These remarks conclude the proof of
part (ii) of Theorem~1.

We now turn to the proof of part (i) of Theorem~\ref{th1}. The basic
construction is similar to the one in the proof of part (ii), so we
only indicate the required changes. To simplify the notation, we
identify the imaginary axis with $L$. Lebesgue measure on the
imaginary axis is denoted by $\nu$. This time we cover only part of
the image of the imaginary axis $L$ by an open set $U.$ To this end,
given $\epsi
>0$, we first construct by induction a sequence of open subsets
$U_1,U_2,\ldots \subset T^2$ with the following properties for each
$n\geq 1$:
\begin{itemize}
\item[(1)] There is $t_n\geq n$ so that $\nu (U_n\cap [0,it_n])> (1-\epsi /2 )t_n.$
\item[(2)]  The closures $\overline{U_1},\overline{U_2},\ldots ,\overline{U_n}$ are disjoint.
\item[(3)]  The set $U_n$ is a finite union of open dyadic squares and
$$
\sum_{j=1}^ nm_2(U_j)<\epsi /2 .
$$
\end{itemize}
In the first step, we set $t_1=1$ and, apart from a finite number of
points, we cover $[0, it_1]$ by a finite union of dyadic open cubes
$U_1$ with $m_2 (U_1)=m_2(\overline{U_1})<\epsi /2 $. Assume then
that sets $U_1,\ldots  U_n$ with the right properties have been
found. Since we are dealing with finite unions of open squares, it
holds that $m_2(\overline{\bigcup_{j=1}^ nU_j})\leq \sum_{j=1}^
nm_2(U_j)<\epsi /2$ and hence we may apply the continuous version of
Weyl's equi-distribution theorem for Kronecker flows in order to
select  $t_{n+1}\geq n+1$ with
$$
\nu \big((\overline{\cup_{j=1}^ nU_j})\cap [0,it_{n+1}]\big)<\epsi
/2.
$$
Then $U_{n+1}$ is obtained by covering a sufficiently large
portion of the set $[0,it_{n+1}]\setminus \overline{\bigcup_{j=1}^
nU_j}$ by a union of open dyadic squares that has a positive
distance to $\overline{\bigcup_{j=1}^ nU_j}$ and satisfies
$m_2(U_{n+1})<\epsi - \sum_{j=1}^ nm_2(U_j).$ This completes the
induction.

Set $U=\bigcup_{k=1}^ \infty U_{2k-1}$ and $V=\bigcup_{k=1}^ \infty
U_{2k}$. We run the Rudin construction exactly as in the proof of
part (ii) corresponding to the lower semicontinuous boundary
function $\chi_U+(\epsi /2)\chi_{U^c}.$ Hence, we obtain a
polyharmonic function $h$ on $\D^2$ with (quasi-)radial boundary
values $1$ at almost every point of $U\cap L$ (respectively $\epsi
/2$ at almost every point of $V$), and such that $h$ is the real
part of the analytic function $H$ on $\D^2$. By property (1) of the
sets $U_1,U_2,\ldots $, it is then evident that with sufficiently large $k$ the function
$f(s):=\exp (k(H(2^{-s},3^{-s})-1))$ satisfies
$f\in\Hi$ and
\[ \liminf_{T\to \infty} \frac{1}{T} \int_0^T |f(it)|^2 dt \le \epsi   \]
as well as
\[ \limsup_{T\to \infty} \frac{1}{T} \int_0^T |f(it)|^2 dt =1. \]
\end{proof}

\begin{question} \label{boundaryf}
Which are the functions $\psi$ such that $\psi=|f(it)|$ almost
everywhere for some function $f$ in $\Hi$?
\end{question}
This is no doubt a difficult question, because we do not even have a
description of the radial limits of $|F|$ for $F$ in
$H^\infty(\D^2)$. A loose restatement of the question is as follows:
How much of the almost periodicity of $|f(\sigma+it)|$ on the
vertical lines in $\C_+$ is carried to the boundary limit function
$|f(it)|$?

\section{The embedding problem for $\Hp$}\label{se:embedding}

We begin by recalling the definition of the spaces $\Hp$. We will
use standard multi-index notation, which means that if
$\beta=(\beta_1,\ldots, \beta_k,0,0,\ldots)$, then $z^\beta
=z_1^{\beta_1}\cdot\ldots \cdot z_k^{\beta_k}$. If $p>0$ and
$F(z)=\sum b_\beta z^\beta$ is a finite polynomial in the variables
$z_1,z_2,\ldots$, its $H^p$ norm is  \beqla{eq:2.1} \|
F\|_{H^p(\D^\infty )}:=\left(\int_{\T^\infty}|F(\tau)|^p\, dm_\infty
(\tau )\right)^{1/p},\nonumber \eeq where $m_\infty$ is the Haar
measure on the distinguished boundary $\T^\infty .$ The space $H^p
(\D^\infty )$ is obtained by taking the closure of the set of
polynomials with respect to this norm (quasi-norm in case $p\in (0,1)$). For $p\geq 1$, it consists of
all analytic elements in $L^p(\T^\infty )$, i.e., all functions in
$L^p$ for which all Fourier coefficients with at least one negative
index vanish. Obviously, $\| F\|^2_{H^2(\D^\infty
)}=\sum_{\beta}|b_\beta|^2.$

Now let $f(s)=\sum_{n=1}^m a_nn^{-s}$ be a finite Dirichlet
polynomial. By the Bohr correspondence, $f$ lifts to the
polynomial $F(z)=\sum b_\beta z^\beta$ on $\D^\infty ,$ where
$b_\beta =a_n$, given that $n$ has the prime factorization
$n=p_1^{\beta_1}p_2^{\beta_2}\cdot \ldots \cdot p_k^{\beta_k}$;
here $p_1=2, p_2=3, \ldots$ are the primes listed in increasing
order. We define $\| f\|_{\Hp}:=\| F\|_{H^p (\D^\infty )}.$ The
space $\Hp$ is obtained \cite{B00} by taking the closure of the
Dirichlet polynomials with respect to this norm. Consequently, the
spaces $H^p(\D^\infty )$ and $\Hp$ are isometrically isomorphic
via the Bohr correspondence.

When $p=2$, the definition given above coincides with the original
one for the Hilbert space $\Ht$, and the Bohr correspondence $f
\leftrightarrow F$ carries over to the case $p=\infty$. In fact,
also Carlson's theorem (with $\sigma=0$) can be used to define the
$\Hp$ norm: for every Dirichlet polynomial $f$ and $p>0$ we have
\beqla{eq:2.0} \| f\|^p_{\Hp }= \lim_{T\to
\infty}\frac{1}{2T}\int_{-T}^T |f(i t)|^p\, dt. \eeq The equality
follows by an application of the ergodic theorem, since $f$ is
continuous. However, let us also sketch a more elementary proof. By
polarizing Carlson's identity \refeq{means} and applying the
resulting inner product identity to the functions $f^j$ and $f^k$
(with integers $j,k\geq 0$) we obtain
\[ \int_{\T^\infty}F^j\overline{F}^ k\, dm_\infty (\tau ) =\lim_{T\to
\infty}\frac{1}{2T}\int_{-T}^T f(i t)^j\overline{f(i t)}^k\, dt.\]
The identity \refeq{eq:2.0} is then obtained by applying
Weierstrass's theorem on polynomial approximation to the
continuous function $z\to |z|^p$ on the set  $\{ z\, :\, |z|\leq
2\|f\|_\infty\} .$

Estimates obtained by B. Cole and T. Gamelin \cite[Theorem]{CG86}
verify that point evaluations $f\mapsto f(s)$ are bounded in $\Hp$
if and only if $s$ is in the half-plane $\C^+_{1/2}=\{ s=s+it:\
\sigma>1/2\}$. The norm of the functional of point evaluation is of
order $(\sigma -1/2)^{-1/p},$ just as it is for functions in
$H^p(\C^+_{1/2})$. Hence elements of $\Hp$ are analytic in
$\C^+_{1/2}$ with uniformly converging Dirichlet series in any
half-plane $\sigma  \geq 1/2+\varepsilon$ We refer to \cite{B00} for
additional information about the spaces $\Hp$.

A question of primary importance concerning $\Hp$, first considered
by Bayart \cite{B00}, is whether the analogue of the embedding
result \refeq{embedding} holds for $p\neq 2$. It suffices to
formulate the question only for polynomials, since existence of
non-tangential boundary values almost everywhere would be an
immediate consequence of a positive answer, and the inequality could
then be stated for all elements in $\Hp.$

\begin{question}[The embedding problem]\label{question}
Fix an exponent $p>0$, that is not an even integer. Does there
exist a constant $C_p <\infty$ such that
\begin{equation} \label{p-embedding} \int_{0}^{1}\left|f\left(\frac{1}{2}+i
t\right)\right|^p\,  dt \le C_p \|f\|_{\Hp}^p
\end{equation}
for every Dirichlet polynomial $f$?
\end{question}

We have excluded the case $p=2k$ with $k\in\nanu$ because then the
answer is trivially positive: Just apply the case $p=2$ to the
function $f^k$ in $\Ht$. This observation\footnote{In \cite{B00},
Bayart proclaimed a positive answer to Question 6 for $p>2$.
Unfortunately, his proof, based on this observation and an
interpolation argument, contains a mistake.} provides evidence in
favor of a positive answer. The growth estimates for functions in
$\Hp$ mentioned above point in the same direction. An answer to
Question \ref{p-embedding} seems to be a prerequisite for a further
development of the theory of the spaces $\Hp$.

Let us now point at some properties of the spaces $\Hp$---no doubt
known to specialists---indicating that Question \ref{question} is
deep and most probably very difficult. First of all, it is easily
seen that for $p>1$ the isometric subspace $\Hp (\D^\infty
)\subset L^p(\T^\infty )$ is not complemented in $L^p(\T^\infty )$
unless $p=2.$ Namely, if there were a bounded projection, one
could easily apply the Rudin averaging technique to show that the
$L^2$-orthogonal projection is bounded in $L^p$. In other words,
the infinite product of one-dimensional Riesz projections would be
bounded in $L^p$. A fortiori, the only possibility is that the
norm of the dimensional projection is one (simply consider
products of functions each depending on one variable only), i.e.
$p=2.$ This fact makes it difficult to apply interpolation between
the already known values $p=2,4,6,\ldots$. Moreover, similar
arguments show that, in the natural duality, we have
${\mathscr{H}}^{p'}\subset (\Hp)'$, but the inclusion is strict
whenever $p\not=2.$ In fact, for $p\in (1,2)$ one has
\begin{equation}\label{duality}
\mbox{if} \quad p\in (1,2),\;\;\mbox{then}\quad (\Hp)'\subset
{\mathscr{H}}^{q}\quad \mbox{if and only if} \;\; q\leq 2.
\end{equation}

There are some famous unresolved conjectures in analytic number
theory, due to H. Montgomery, that deal with  norm inequalities
for Dirichlet polynomials (see \cite[pp. 129, 146]{M94} or
\cite[p. 232--235]{IK05}). One of Montgomery's
conjectures states that for every $\varepsilon
>0$ and $p\in (2,4)$
there exists $C=C(\varepsilon )$ such that for all finite
Dirichlet polynomials $f=\sum_{n=1}^ Na_nn^{-s}$ with $|a_n|\leq
1$ one has
 \begin{equation} \label{montgomery} \int_{0}^{T}\left|f\left(i
t\right)\right|^p\,  dt \le CN^{p/2+\varepsilon}(T+ N^{p/2})\quad \mbox{for}\;\; T>1.
\end{equation}
If true, this inequality would imply the density hypothesis for
the zeros of the Riemann zeta function. It is quite interesting to
note that \refeq{montgomery} is also known to be true for $p=2,4$
(or any even integer). The similarities suggest for a possible
connection between Montgomery's conjectures and our embedding
problem. Although it appears to be difficult to give a precise
link, both problems can be understood as dealing with the ``degree
of flatness'' of Dirichlet polynomials.

As a first step towards a solution of the embedding problem, one
could ask for a weaker partial result:
\begin{question}\label{question2}
Assume that $2<q<p<4.$ Is it true that \[
\left(\int_{0}^{1}\left|f\left(\frac{1}{2}+i t\right)\right|^q \,
dt\right)^{1/q} \le C_q \|f\|_{{\mathscr{H}}^p}?\] Is this true at
least for one such pair of exponents?
\end{question}
Let us denote by $A$ the adjoint operator of the natural embedding
operator. Thus  for functions $g$ on $[0,1]$ one has
$$
Ag =\sum_{n=1}^\infty \big(n^{-1/2} \widehat g(\log n)\big)
n^{-s},
$$
where $\widehat{g}$ is the Fourier transform of $g$. Observe that,
due to \refeq{duality}, the existence of the embedding
\refeq{p-embedding} does not imply that $A:L^{p'}(0,1)\to
{\mathscr{H}}^{p'}$. However, a positive answer to the following
question would, by interpolation, imply that for each $p>2$ there is
$q>2$ such that the embedding operator acts boundedly from $\Hp$ to
$L^q(0,1)$.
\begin{question}\label{question3}
Is there an exponent $r\in (1,2)$ such that $A:L^{r}\to
H^{1}(\D^\infty )$ is bounded?
\end{question}

\section{Fatou theorems for $\Hp$}
We will now in some sense return to what appeared as a difficulty in
the proof of Theorem~1, namely that the imaginary axis has measure
zero when viewed as a subset of $\T^2$. Thus, a priori, it makes no
sense to speak about the restriction to the imaginary axis of a
function in $L^p(\T^\infty)$. We will now show that, for functions
in $H^p(\D^\infty )$, we can find a meaningful connection to the
boundary limits of the corresponding Dirichlet series.

We consider a special type of boundary approach by setting for each
$\tau =(\tau_1,\tau_2,\ldots )\in\T^\infty$ and $\theta \geq 0$
$$
b_\theta (\tau) :=(p_1^{-\theta}\tau_1,p_2^{-\theta}\tau_1,\ldots ).
$$
We also recall that the Kronecker flow on $\overline{\D}^\infty$ is defined by setting
$$
T_t((z_1,z_2,\ldots )):=(p_1^{-it}z_1,p_2^{-it}z_2,\ldots ).
$$
 For an arbitrary $z\in\overline{\D}^\infty$,
we denote by $T(z)$ the image of $z$ under this flow, i.e., $T(z)$
is the one-dimensional complex variety $T(z):= \{T_t(z): t\in\real
\}$. We equip $T(z)$ with the natural linear measure, which is just
Lebesgue measure on the real $t$-line. Moreover, for $\sigma
>0$, we set $\T^\infty_\sigma :=b_\sigma (\T^\infty )$, which is the
distinguished boundary of the open set $ b_\sigma ( \D^\infty ).$
The natural Haar measure $m_{\infty,\sigma}$ on $\T^\infty_\sigma$
is obtained as the pushforward of $m_\infty$ under the map $b_\theta
$. The set $\T^\infty_{1/2}$ is of special interest, since in a
sense it serves as a natural boundary for the set
$\D^\infty\cap\ell^2$, where point evaluations are bounded for the
space $H^p(\D^\infty )$ with $p\in (0,\infty )$.

Our version of Fatou's theorem for $H^\infty$ reads as follows.

\begin{theorem}\label{fatouinf}
Let $F$ be a function in $H^\infty (\D^\infty )$. Then we may pick a
representative $\widetilde{F}$ for the boundary function of $F$ on
the distinguished boundary $\T^\infty$ such that
$\widetilde{F}(\tau)=\lim_{\theta\to 0^+}F(b_\theta (\tau ))$ for
almost every $\tau\in\T^\infty .$ In fact, for every
$\tau\in\T^\infty $, we have
 $\widetilde{F}(\tau')=\lim_{\theta\to 0^+}F(b_\theta (\tau' ))$
 for almost every $\tau'\in T(\tau )$.
\end{theorem}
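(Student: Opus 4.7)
The plan is to reduce the infinite-dimensional Fatou statement to the classical Fatou theorem on the right half-plane applied along individual Kronecker orbits, and then transfer the pointwise conclusion from the orbits to almost every $\tau\in\T^\infty$ by a Fubini argument that exploits measure-preservation of the flow $\{T_t\}$.

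The first step is to associate, to each $\tau\in\T^\infty$, the ``twisted'' Dirichlet slice
\[
\Phi_\tau(s):=F\bigl(b_\sigma(T_t(\tau))\bigr)=F\bigl((p_j^{-s}\tau_j)_{j\geq 1}\bigr),\qquad s=\sigma+it,\ \sigma>0.
\]
Since the rotation $z\mapsto\tau z$ is a linear isometry of $H^\infty(\D^\infty)$, the function $F_\tau(z):=F(\tau z)$ also lies in $H^\infty(\D^\infty)$ with the same norm, and the Bohr correspondence identifies its associated Dirichlet series with $\Phi_\tau$; hence $\Phi_\tau\in\Hi$ and $\|\Phi_\tau\|_\infty\leq\|F\|_\infty$. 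In particular $\Phi_\tau$ is bounded and analytic on the right half-plane, so by the classical Fatou theorem the horizontal boundary values $\Phi_\tau^*(it):=\lim_{\sigma\to 0^+}\Phi_\tau(\sigma+it)$ exist for Lebesgue-a.e.\ $t\in\real$, and this holds for \emph{every} $\tau\in\T^\infty$. Since $t\mapsto T_t(\tau)$ parametrizes the orbit $T(\tau)$ isometrically with respect to its linear measure, this already delivers the second assertion of the theorem.

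For the first assertion, set $A:=\{\tau\in\T^\infty:\lim_{\theta\to 0^+}F(b_\theta\tau)\text{ exists}\}$, a Borel set, and let $\widetilde F(\tau)$ denote this limit on $A$ and zero otherwise. By the orbital statement, for every $\tau$ the slice $\{t\in[0,1]:T_t\tau\in A\}$ has full Lebesgue measure. Since $(t,\tau)\mapsto T_t\tau$ is jointly Borel and $T_t$ preserves $m_\infty$ for every $t$, Fubini applied to $\chi_{A^c}\circ T_t$ on $[0,1]\times\T^\infty$ gives
\[
0=\int_{\T^\infty}\!\!\int_0^1\chi_{A^c}(T_t\tau)\,dt\,dm_\infty(\tau)=\int_0^1 m_\infty(A^c)\,dt=m_\infty(A^c),
\]
so the limit exists at $m_\infty$-a.e.\ $\tau$. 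To identify $\widetilde F$ with the canonical boundary function $F^*$ of $F$ on $\T^\infty$ (obtained from the Fourier partial sums of $F$), I would note that $F(b_\theta\tau)=\sum_\beta b_\beta(p^\beta)^{-\theta}\tau^\beta$ is an Abel-type regularization of that Fourier series and converges to $F^*$ in $L^2(\T^\infty)$ by dominated convergence on the coefficient side; the uniform bound $|F(b_\theta\tau)|\leq\|F\|_\infty$ combined with the a.e.\ pointwise convergence $F(b_\theta\tau)\to\widetilde F(\tau)$ then forces $\widetilde F=F^*$ a.e.

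The conceptual obstacle is the very first step: turning the abstract hypothesis $F\in H^\infty(\D^\infty)$ into the concrete fact that for \emph{every} single $\tau$ the slice $\Phi_\tau$ lies in $\Hi$. Once this has been secured via rotational invariance and Bohr's correspondence, the classical one-variable Fatou theorem does the analytic work orbit by orbit, and the remaining passage to a.e.\ $\tau\in\T^\infty$ is a routine measure-preservation/Fubini argument.
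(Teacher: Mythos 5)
Your argument follows essentially the same strategy as the paper's: reduce to the classical one-variable Fatou theorem on each twisted Kronecker orbit (your $\Phi_\tau$ is the paper's $f_\tau(\theta+it)=F(T_tb_\theta(\tau))$, since $b_\theta$ and $T_t$ commute), transfer from orbits to $m_\infty$-almost every $\tau$ using measure-preservation of the flow, and identify $\widetilde F$ with the canonical boundary function via dominated convergence. The paper phrases the transfer step as ``an immediate consequence of the ergodicity of the Kronecker flow,'' but what it actually uses is precisely your Fubini/measure-preservation argument (the set $A$ need not be flow-invariant, so your version is arguably the cleaner one), and your $L^2$-convergence identification of $\widetilde F$ with $F^*$ is a minor variant of the paper's Fourier-coefficient computation --- both are correct.
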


\begin{proof}
Recall that by \cite{CG86} the values of  $H^\infty (\D^\infty
)$-functions are well-defined in $\D^\infty$ at points $z$ with
coordinates tending to zero, i.e. for $z\in c_0.$ We simply define
the desired representative $\widetilde F$  for the the boundary
values by setting $\widetilde F(\tau)=\lim_{\theta\to
0^+}F(\theta\circ \tau )$ whenever this limit exists and otherwise
$\widetilde F(\tau)=0$. The Borel measurability of $\widetilde F$ is
clear. The second statement follows immediately by considering for
each $\tau\in\T^\infty $ the analytic function $f_\tau:
f_\tau(\theta +it)= F(T_tb_\theta (\tau ) )$ and observing that for
each $\tau\in\T^\infty $ we have $f_\tau\in\Hi$. Now the classical
Fatou theorem applies to $f_\tau$. The fact that the set
$\{\tau\in\T^\infty : \; \lim_{\theta\to 0^+}F(b_\theta ( \tau )
)\,\mbox{exists}\}$ has full measure is an immediate consequence of
the ergodicity of the Kronecker flow $\{T_t\}_{t\geq 0}$ and the
second statement. Finally, we  observe that it is easy to check  the
formula
\[\widehat F(\beta )=p_1^{\beta_1\sigma}\cdot\ldots\cdot
p_k^{\beta_k\sigma}\int_{\T^\infty} F(b_\theta (\tau
))\overline{\tau}^\beta m_\infty (d\tau )\] for the Fourier
coefficients of an $H^\infty (\D^\infty )$-function. Lebesgue's
dominated convergence theorem now yields $\widehat{\widetilde
F}=\widehat F$, whence $\widetilde F =F$ almost surely, and this
finishes the proof of the first statement.
\end{proof}

To arrive at a similar result for $\Hp$, we need to make sense of
the restriction $F\mapsto F|_{\T^\infty_{1/2}}$ as a map from $H^p
(\D^\infty )$ to $L^p(\T^\infty_{1/2},m_{\infty, 1/2})$. When $F$ is
a polynomial, we must have
\[ F|_{\T^\infty_{1/2}}(\tau)=F(b_{1/2}(\tau)). \]
Since this formula can be written as a Poisson integral and the
polynomials are dense in $H^p (\D^\infty )$, this leads to a
definition of $F|_{\T^\infty_{1/2}}$ for general $F$. Indeed, by
using elementary properties of Poisson kernels for finite polydiscs,
we get that $F\mapsto F|_{\T^\infty_{1/2}}$ is a contraction from
$H^p (\D^\infty )$ to $L^p(\T^\infty_{1/2},m_{\infty, 1/2})$.

\begin{theorem}\label{fatoup}
Let $F$ be a function $H^p (\D^\infty )$ for $p\ge 2$. Then we may
pick a representative $\widetilde{F}_{1/2}$ for the restriction
$F|_{\T^\infty_{1/2}}$ on the distinguished boundary $\T^\infty$
such that $\widetilde{F}(\tau)=\lim_{\theta\to 1/2^+}F(b_\theta
(\tau ))$ for almost every $\tau\in\T^\infty .$ In fact, for every
$\tau\in\T^\infty $, we have
 $\widetilde{F}_{1/2}(\tau')=\lim_{\theta\to 1/2^+}F(b_\theta (\tau' ))$
 for almost every $\tau'\in T(\tau )$.
\end{theorem}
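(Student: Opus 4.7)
The proof closely mirrors that of Theorem~\ref{fatouinf}, with the classical half-plane Fatou theorem for $\Hi$ replaced by the embedding \eqref{embedding} for $\Ht$. The hypothesis $p\ge 2$ enters only through the inclusion $H^p(\D^\infty)\subset H^2(\D^\infty)$, valid since $m_\infty$ is a probability measure; this gives $\sum_\beta|\widehat F(\beta)|^2<\infty$, so every slice of $F$ along a Kronecker orbit is automatically in $\Ht$.

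For each $\tau\in\T^\infty$ I would define
\[
f_\tau(\sigma+it):=F(T_tb_\sigma(\tau))=\sum_{n=1}^\infty \widehat F(\beta(n))\,\tau^{\beta(n)}\,n^{-s},\qquad \sigma>1/2,
\]
where $\beta(n)$ is the Bohr multi-index of $n$. Since $|\tau_j|=1$, Parseval gives $\|f_\tau\|_{\Ht}=\|F\|_{H^2(\D^\infty)}<\infty$, so $f_\tau\in\Ht$. By \eqref{embedding}, $f_\tau(s)/s$ belongs to the Hardy space $H^2$ of $\Re s>1/2$, and the classical Fatou theorem yields the horizontal boundary limit $\lim_{\sigma\to 1/2^+}f_\tau(\sigma+it)$ for almost every $t\in\real$. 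Because $T_t$ and $b_\sigma$ commute, writing $\tau'=T_t\tau$ delivers the strong second assertion: for every $\tau\in\T^\infty$ the limit $\lim_{\theta\to 1/2^+}F(b_\theta(\tau'))$ exists for a.e.\ $\tau'\in T(\tau)$. Setting $\widetilde F_{1/2}(\tau)$ to equal this limit when it exists and $0$ otherwise, and letting $E$ denote the set where the limit exists, Fubini together with the $T_t$-invariance of $m_\infty$ gives
\[
2T\cdot m_\infty(\T^\infty\setminus E)=\int_{\T^\infty}\bigl|\{t\in[-T,T]\colon T_t\tau\notin E\}\bigr|\,dm_\infty(\tau)=0
\]
for every $T>0$, whence $m_\infty(E)=1$ and the first assertion follows.

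Finally, I would identify $\widetilde F_{1/2}$ with the restriction $F|_{\T^\infty_{1/2}}$ via the measure-preserving bijection $b_{1/2}\colon\T^\infty\to\T^\infty_{1/2}$. Set $G(\tau):=F|_{\T^\infty_{1/2}}(b_{1/2}(\tau))$ and $F_\theta(\tau):=F(b_\theta(\tau))$ for $\theta>1/2$. Both $F\mapsto F_\theta$ and $F\mapsto G$ are $L^p(\T^\infty,m_\infty)$-contractions on $H^p(\D^\infty)$ (the former because $F_\theta$ is the pullback under the measure-preserving $b_\theta$ of the restriction of $F$ to $\T^\infty_\theta$, which is itself a contraction by the finite-polydisc Poisson estimate used earlier for $\theta=1/2$). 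For a Dirichlet polynomial $P$ one has $P(b_\theta(\tau))\to P(b_{1/2}(\tau))$ uniformly on $\T^\infty$ as $\theta\to 1/2^+$; combining this with density of Dirichlet polynomials in $\Hp$ and the two contractions, a three-epsilon argument produces $F_\theta\to G$ in $L^p(\T^\infty,m_\infty)$. Extracting an a.e.\ convergent subsequence and matching it against the pointwise limit $\widetilde F_{1/2}$ from the previous paragraph forces $\widetilde F_{1/2}=G$ almost everywhere, completing the proof. The main obstacle, as I see it, is precisely this last identification: one must reconcile the \emph{pointwise} nontangential limit obtained orbit by orbit with the \emph{$L^p$} definition of the restriction $F|_{\T^\infty_{1/2}}$, and the bridge is exactly the contraction property of $F\mapsto F|_{\T^\infty_\theta}$ for $\theta\ge 1/2$ together with polynomial density in $\Hp$.
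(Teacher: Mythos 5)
Your proof is correct and takes essentially the same route as the paper, whose argument amounts to the one-line instruction to repeat the proof of Theorem~\ref{fatouinf} with the classical Fatou theorem replaced by the $p=2$ embedding \eqref{embedding} applied to the Kronecker-orbit slices $f_\tau\in\Ht$ (using $H^p(\D^\infty)\subset H^2(\D^\infty)$ for $p\ge 2$). You have worked out the details that this instruction leaves implicit, in particular the identification of the pointwise limit with the $L^p$-defined restriction via the contraction property and polynomial density; the only minor deviation is your use of Fubini and $T_t$-invariance in place of the paper's appeal to ergodicity of the Kronecker flow for the full-measure step, which is an equally valid (and arguably more elementary) way to reach the same conclusion.
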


\begin{proof} The existence of the boundary values is
obtained just as in the proof of Theorem~\ref{fatouinf}. This time
one applies the known embedding for $p=2$ to define
$\widetilde{F}_{1/2}$.
\end{proof}

We may now observe that if $F$ is in $H^p(\D^\infty)$ ($p\ge 2$) and
the embedding \eqref{p-embedding} holds, then we have for every
$\tau\in \T^\infty_{1/2}$
\begin{equation}\label{p-Carlson}
\lim_{T\to\infty} \frac{1}{T}\int_{0}^T |\widetilde{F}(T_t\tau
)|^p\, dt=\|\widetilde{F}_{1/2}\|^p_{L^p(\T^\infty )}.
\end{equation}
Indeed, \eqref{p-Carlson} holds for polynomials. Hence, employing
\eqref{p-embedding} and the fact that polynomials are dense in
$H^p(\D^\infty$, we obtain \eqref{p-Carlson}.

On the other hand, if \refeq{p-Carlson} is true, then by the closed
graph theorem (fix $T=1$), the embedding \refeq{p-embedding}
follows. We have therefore made sense of the statement that the
``p-Carlson identity'' \refeq{p-Carlson} is equivalent to the
embedding \refeq{p-embedding}.

It is rather puzzling that \eqref{p-Carlson}, which may be
understood as a strengthened variant of the Birkhoff--Khinchin
ergodic theorem for functions in $H^p(\D^\infty)$, is known to hold
only when $p=2,4,6,...$.

\medskip

\noindent{\sl Acknowledgement.} We thank P. Lindqvist for useful
discussions concerning Section \ref{se:embedding}.

\end{document}